\numberwithin{equation}{section}
\allowdisplaybreaks \setlength{\textwidth}{6.5in}
\numberwithin{equation}{section}
\newtheorem{lemma}{Lemma}[section]
\newtheorem{prop}{Proposition}
\newtheorem{theorem}[prop]{Theorem}
\newtheorem{corollary}[prop]{Corollary}
\newtheorem{remark}[prop]{Remark}
\theoremstyle{definition}
\newcommand{\reals}{{\mathbb R}}
\newcommand{\bbr}{\reals}
        \newcommand{\vrightarrow}{\,{\buildrel v \over \rightarrow}\,}
\newcommand{\CurveC}{{\mathcal C}}
\newcommand{\one}{{\bf 1}}
\newcommand{\alphain}{\alpha_{\text in}}
\newcommand{\alphaout}{\alpha_{\text out}}
\newcommand{\deltain}{\delta_{\text in}}
\newcommand{\deltaout}{\delta_{\text out}}
\newcommand\independent{\protect\mathpalette{\protect\independenT}{\perp}}
\def\independenT#1#2{\mathrel{\rlap{$#1#2$}\mkern2mu{#1#2}}}
\def\E{\mathbb{E}}
\def\bzero{\boldsymbol 0}
\begin{document}

\title[Regular variation of in- and out-degrees]{Nonstandard regular variation of
 in-degree and  out-degree in the preferential attachment model}

\author[G. Samorodnitsky]{Gennady Samorodnitsky}
\address{School of Operations Research and Information Engineering\\
and Department of Statistical Science \\
Cornell University \\
Ithaca, NY 14853}
\email{gs18@cornell.edu}
\author[S. Resnick]{Sidney Resnick}
\address{School of Operations Research and Information Engineering\\
Cornell University \\
Ithaca, NY 14853}
\email{sir1@cornell.edu}
\author[D. Towsley]{Don Towsley}
\address{Department of  Computer Science \\
University of Massachusetts \\
Amherst, MA 01003}
\email{towsley@cs.umass.edu}
\author[R. Davis]{Richard Davis}
\address{Department of Statistics\\
Columbia University\\ 
New York, NY 10027}
\email{rdavis@stat.columbia.edu}
\author[A. Willis]{Amy Willis}
\address{Department of Statistical Science\\
Cornell University \\
Ithaca, NY 14853}
\email{adw96@cornell.edu}
\author[P. Wan]{Phyllis Wan}
\address{Department of Statistics\\
Columbia University\\ 
New York, NY 10027}
\email{phyllis@stat.columbia.edu}

\thanks{The research of the authors was  supported by MURI ARO
Grant  W911NF-12-10385  to Cornell University.}

\subjclass{Primary 60G70, 05C80} 
\keywords{multivariate heavy tails, preferential attachment model,
scale free networks.
\vspace{.5ex}}

\setcounter{lemma}{0}


\begin{abstract}
For   the directed edge preferential attachment network growth model
studied by \cite{bollobas:borgs:chayes:riordan:2003} and
\cite{krapivsky:redner:2001}, we prove that the joint distribution of
in-degree and 
out-degree 
 has  jointly regularly varying
tails. 
Typically the marginal tails of the  in-degree distribution and the out-degree
distribution have different regular variation indices and so the joint
regular variation is non-standard.
Only marginal regular variation has been
previously established for this distribution in the cases where the
marginal tail indices are different. 
\end{abstract} 

\maketitle
\section{Introduction}\label{sec: Intro}

The directed edge  preferential attachment model 
studied by \cite{bollobas:borgs:chayes:riordan:2003} and 
\cite{krapivsky:redner:2001}
is a model for a growing directed random graph. The dynamics of the
model are as follows. Choose as parameters nonnegative real numbers
$\alpha, 
\beta, \gamma$, $\delta_{\text in}$ and $\delta_{\text out}$, such
that $\alpha+\beta+\gamma=1$. To avoid degenerate situations we will
assume  that each of the numbers $\alpha, 
\beta, \gamma$ is strictly smaller than 1. 

At each step of the growth algorithm we obtain a new graph
by adding  one edge to an
existing graph. We will enumerate the
obtained graphs by the number of edges they contain. 
We start with an arbitrary initial finite directed
graph, with at least one node and $n_0$ edges,
denoted $G(n_0)$. For $n =n_0+1,n_0+2,\ldots$, 
$G(n)$ will be a graph with $n$ edges and a random number $N(n)$ of
nodes. If $u$ is a node in $G(n-1)$, $D_{\rm in}(u)$ and $D_{\rm out}(u)$ 
denote the in  and out degree of $u$ respectively. The graph $G(n)$ is
obtained from  $G(n-1)$ as follows.
\begin{itemize}
\item 
With probability
$\alpha$  we append to $G(n-1)$ a new node $v$ and an edge leading
from $v$ to an existing node $w$ in $G(n-1)$ (denoted $v \mapsto w$).
The existing node $w$ in $G(n-1)$ is chosen with probability depending
on its in-degree:
$$
p(\text{$w$ is chosen}) = \frac{D_{\rm in}(w)+\delta_{\text
    in}}{n-1+\delta_{\text in}N(n-1)} \,.
$$
\item 
With probability $\beta$ we only append to $G(n-1)$ a directed edge
$v\mapsto w$
between two existing nodes $v$ and $w$ of $G(n-1)$.
 The existing nodes $v,w$ are chosen independently from the nodes of $G(n-1)$ with
 probabilities 
$$
p(\text{$v$ is chosen}) = \frac{D_{\rm out}(v)+\delta_{\text
    out}}{n-1+\delta_{\text out}N(n-1)}, \ \ 
p(\text{$w$ is chosen}) = \frac{D_{\rm in}(w)+\delta_{\text
    in}}{n-1+\delta_{\text in}N(n-1)}\,.
$$
\item With probability
$\gamma$  we append to $G(n-1)$ a new node $w$ and an edge $v\mapsto
w$ leading
from  the existing node $v$ in $G(n-1)$  to the new node $w$. The
existing node $v$ in $G(n-1)$ is chosen with probability
$$
p(\text{$v$ is chosen}) = \frac{D_{\rm out}(v)+\delta_{\text
    out}}{n-1+\delta_{\text out}N(n-1)}\,.
$$
\end{itemize}
If either
$\delta_{\text in}=0$, or $\delta_{\text out}=0$, we must have
$n_0 >1$ for the initial steps of the algorithm to make
sense. 

For $i,j=0,1,2,\ldots$ and $n\geq n_0$, let $N_{ij}(n)$ be the
(random) number of nodes in $G(n)$ with in-degree $i$ and out-degree
$j$. Theorem 3.2 in \cite{bollobas:borgs:chayes:riordan:2003}  shows
that there are nonrandom constants $(f_{ij})$ such that
\begin{equation} \label{e:limit.f}
\lim_{n\to\infty} \frac{N_{ij}(n)}{n}=f_{ij} \ \ \text{a.s. for
  $i,j=0,1,2,\ldots$.}
\end{equation}
Clearly, $f_{00}=0$. Since we obviously have 
$$
\lim_{n\to\infty} \frac{N(n)}{n}=1-\beta \ \ \text{a.s.,}
$$
we see that the empirical joint in- and out-degree distribution in the
sequence $(G(n))$ of growing random graphs has as a nonrandom limit
the probability distribution 
\begin{equation} \label{e:limit.p}
\lim_{n\to\infty}
\frac{N_{ij}(n)}{N(n)}=\frac{f_{ij}}{1-\beta}=:p_{ij} \ \
\text{a.s. for   $i,j=0,1,2,\ldots$.}
\end{equation}
In \cite{bollobas:borgs:chayes:riordan:2003} it was shown that the
limiting degree distribution $(p_{ij})$ has, marginally, regularly
varying (in fact, power-like) tails. Specifically, Theorem 3.1 {\it
  ibid.} shows that for some finite positive constants $C_{\text in}$
and $C_{\text out}$ we have
\begin{equation} \label{e:marginal.regvar}
p_i(\text{in}):= \sum_{j=0}^\infty p_{ij} \sim C_{\text
  in}i^{-\alpha_{\text in}} \ \ \text{as $i\to\infty$, as long as
  $\alpha \delta_{\text in}+\gamma>0$,}  
\end{equation} 
$$
p_j(\text{out}):= \sum_{i=0}^\infty p_{ij} \sim C_{\text
  out}j^{-\alpha_{\text out}} \ \ \text{as $j\to\infty$, as long as
  $\gamma \delta_{\text out}+\alpha>0$.}
$$
Here
\begin{equation} \label{e:exponents}
\alphain = 1+ \frac{1+\deltain(\alpha+\gamma)}{\alpha+\beta}, \ \ 
\alphaout = 1+ \frac{1+\deltaout(\alpha+\gamma)}{\gamma+\beta}\,.
\end{equation}
We will prove that the limiting degree distribution $(p_{ij})$ in
\eqref{e:limit.p} has jointly regularly varying tails and obtain the
corresponding tail measure.

This paper is organized as follows. We start with a summary of
multivariate regular variation in Section \ref{sec:mult.reg.var}. 
In Section \ref{sec:genfuncion} we
show that the joint generating function of  in-degree and 
out-degree satisfies a  partial differential equation. We solve the
differential equation and obtain an expression for the
generating function. 
In Section \ref{sec:InOutRegVar} we represent the distribution corresponding to the
generating function as a mixture of negative
binomial random variables where the mixing distribution is
Pareto. This allows direct computation of the tail  measure of the
non-standard regular variation of in- and out-degree without using
transform methods. The tail
measure  is absolutely continuous
with respect to two dimensional Lebesgue measure, and we exhibit its density. 
We also present in Section \ref{subsec:plots}  graphical evidence of
the variety of dependence structures possible for the tail measure
based on explicit formulae, simulation and iteration of the defining
difference equation for limiting frequencies.

Using the joint generating function of $\{p_{ij}\}$, an alternate
route for studing heavy tail behavior of in- and out-degree is to use 
transform methods and Tauberian theory.
The multivariate  Tauberian theory has been developed and we will
report this
elsewhere.

\section{Multivariate regular variation} \label{sec:mult.reg.var} 
We briefly review the basic concepts of multivariate regular
variation \citep{resnick:2007}  which forms the mathematical framework for multivariate
heavy tails. We restrict attention to two dimensions since this is the
context for the rest of the paper. 

A random vector $(X,Y)\geq \bzero$ has a distribution that is 
non-standard regularly varying  if there exist {\it
  scaling functions\/} $a(h)\uparrow \infty$ and  $b(h)\uparrow
\infty$ and a non-zero limit measure $\nu(\cdot)$ called the {\it limit or tail
measure\/} such that as $h\to\infty$,
\begin{equation}\label{e:defreg}
hP\bigl[\bigl(X/a(h),Y/b(h)\bigr) \in \cdot \,\bigr] \vrightarrow  \nu(\cdot)
\end{equation}
where ``$\vrightarrow $'' denotes vague convergence of measures in
$M_+([0,\infty]^2\setminus \{\bzero\})=M_+(\E)$, the space of Radon
measures on $\E$. The scaling functions will be regularly varying and
we assume their indices are positive and therefore, without loss of
generality, we may suppose $a(h)$ and $b(h)$ are continuous and
strictly increasing. The phrasing in \eqref{e:defreg} implies the
marginal distributions have regularly varying tails.

In case $a(h)=b(h)$, $(X,Y)$ has a distribution with {\it standard\/}
  regularly varying tails \citep[Section 6.5.6]{resnick:2007}. Given a vector with a distribution which is
  non-standard regularly varying, there are at least two methods
  \citep[Section 9.2.3]{resnick:2007}
for
  standardizing the vector so that the transformed vector has standard
  regular variation. The simplest is the power method which is
  justified when the scaling functions are power functions:
$$a(h)=h^{\gamma_1},\quad b(h)=h^{\gamma_2},\quad \gamma_i>0, \,i=1,2.$$
For instance with $c=\gamma_2/\gamma_1$,
\begin{equation}\label{e:standard}
hP\bigl[ \bigl( X^c/h^{\gamma_2}, Y/h^{\gamma_2}\bigr) \in \cdot \,]
  \vrightarrow  \tilde \nu(\cdot),
\end{equation}
where if $T(x,y)=(x^c,y)$, then $\tilde \nu=\nu\circ T^{-1}.$ Since
the two scaling functions in \eqref{e:standard} are the same, the regular
variation is now standard. The measure $\tilde \nu$ will  have a
scaling property and for an appropriate change of coordinate system,
the correspondingly transformed $\tilde \nu$ can be factored into a
product; for example the polar
coordinate transform  is one such coordinate system change which
factors $\tilde \nu$ into  a product of a Pareto measure and  an
angular measure and this  is one way to describe the
asymptotic dependence structure of the standardized $(X,Y)$
\citep[Section 6.1.4]{resnick:2007}.
Another suitable transformation is given in Section \ref{sec:InOutRegVar} 
based on ratios.

\section{The joint generating function of in-degree and 
out-degree} \label{sec:genfuncion} 

Define the joint generating function of the limit distribution
$\{p_{ij}\}$  of
 in-degree and  out-degree in \eqref{e:limit.p} by
\begin{equation} \label{e:phi}
\varphi(x,y) = \sum_{i=0}^\infty \sum_{j=0}^\infty x^i y^jp_{ij}, \
0\leq x,y\leq 1\,.
\end{equation}

The following lemma shows that the generating function satisfies a
 partial differential equation.
\begin{lemma} \label{l:pde}
The function $\varphi$ is continuous on the square $[0,1]^2$ and is
infinitely continuously differentiable in the interior of the
square. In this interior it satisfies the equation
\begin{equation} \label{e:pde}
\bigl[ c_1\deltain(1-x) + c_2\deltaout(1-y)+1\bigr] \varphi
+ c_1x(1-x)\frac{\partial \varphi}{\partial x} 
+ c_2y(1-y)\frac{\partial \varphi}{\partial y} 
\end{equation}
$$
= \frac{\alpha}{\alpha+\gamma} y +\frac{\gamma}{\alpha+\gamma} x\,,  
$$
where
\begin{equation} \label{e:c}
c_1 = \frac{\alpha+\beta}{1+\deltain(\alpha+\gamma)}, \ \ 
c_2 = \frac{\beta+\gamma}{1+\deltaout(\alpha+\gamma)}\,.
\end{equation}
\end{lemma}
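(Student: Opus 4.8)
The plan is to obtain \eqref{e:pde} from the linear recursion that the limiting frequencies $f_{ij}$ (equivalently $p_{ij}$) satisfy and then to recast that recursion as a differential identity for $\varphi$ by standard power-series bookkeeping. \emph{Regularity first.} Since $(p_{ij})$ is a probability mass function, the series \eqref{e:phi} is dominated on $[0,1]^2$ by $\sum_{i,j}p_{ij}=1$, so by the Weierstrass $M$-test it converges uniformly there and $\varphi$ is continuous on $[0,1]^2$. For $r,s\in[0,1)$ and any $a,b\ge 0$ one has $\sup_{i,j}i^aj^br^is^j<\infty$, so every formally differentiated series is again dominated by a convergent series of constants on $[0,r]\times[0,s]$; hence $\varphi\in C^\infty$ on the open square $(0,1)^2$ and term-by-term differentiation is legitimate there. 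This is the only place the crude bound $p_{ij}\le1$ is used.

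\emph{The recursion.} Compute $\E[N_{ij}(n)-N_{ij}(n-1)\mid G(n-1)]$ by conditioning on which of the $\alpha$-, $\beta$-, $\gamma$-events occurs at step $n$: each event either creates a node with degree pair $(0,1)$ (the $\alpha$-step) or $(1,0)$ (the $\gamma$-step), raises by one the in-degree of a node picked with probability proportional to $D_{\rm in}(\cdot)+\deltain$ (the $\alpha$- and $\beta$-steps), or raises by one the out-degree of a node picked with probability proportional to $D_{\rm out}(\cdot)+\deltaout$ (the $\beta$- and $\gamma$-steps); the normalizations are $(n-1)+\deltain N(n-1)$ and $(n-1)+\deltaout N(n-1)$ because the total in- and out-degrees each equal $n-1$. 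The event that the two endpoints chosen in the $\beta$-step coincide, and the influence of the fixed initial graph $G(n_0)$, contribute only $\oo{1}$ to this conditional expectation (for fixed $(i,j)$). Average the resulting identity over $n=n_0+1,\dots,m$ and divide by $m$: the left side telescopes to $N_{ij}(m)/m-N_{ij}(n_0)/m$ and tends to $f_{ij}$ by \eqref{e:limit.f} (using bounded convergence, since $N_{ij}(n)/n\le1$), while on the right each summand converges a.s.\ by \eqref{e:limit.f} and $N(n)/n\to1-\beta$ and is uniformly bounded in $n$ for fixed $(i,j)$, so a Cesàro argument identifies the limit of the average. Using $1-\beta=\alpha+\gamma$, the constants \eqref{e:c}, and $p_{ij}=f_{ij}/(1-\beta)$, this yields
\begin{align*}
p_{ij} &= \frac{\alpha}{\alpha+\gamma}\one_{\{(i,j)=(0,1)\}} + \frac{\gamma}{\alpha+\gamma}\one_{\{(i,j)=(1,0)\}} \\
&\quad + c_1\bigl[(i-1+\deltain)p_{i-1,j}-(i+\deltain)p_{ij}\bigr] + c_2\bigl[(j-1+\deltaout)p_{i,j-1}-(j+\deltaout)p_{ij}\bigr],
\end{align*}
with the convention $p_{-1,j}=p_{i,-1}=0$. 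Alternatively, this recursion for $f_{ij}$ may simply be quoted from \cite{bollobas:borgs:chayes:riordan:2003}, where the $f_{ij}$ are \emph{defined} by it.

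\emph{Passing to $\varphi$.} Multiply the recursion by $x^iy^j$ and sum over $i,j\ge0$; all rearrangements are justified on $(0,1)^2$ by the absolute convergence established above. From $\sum_{i,j}ix^iy^jp_{ij}=x\,\partial_x\varphi$ and, after the shift $i\mapsto i+1$ (the boundary term at $i=0$ vanishing because $-1+\deltain$ multiplies $p_{-1,j}=0$), $\sum_{i,j}(i-1+\deltain)x^iy^jp_{i-1,j}=x\bigl(x\,\partial_x\varphi+\deltain\varphi\bigr)$, together with the analogous out-degree identities, the in-degree bracket contributes $c_1(x-1)\bigl(x\,\partial_x\varphi+\deltain\varphi\bigr)$ and the out-degree bracket $c_2(y-1)\bigl(y\,\partial_y\varphi+\deltaout\varphi\bigr)$, while the two indicator terms sum to $\frac{\alpha}{\alpha+\gamma}y+\frac{\gamma}{\alpha+\gamma}x$. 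Collecting the coefficients of $\varphi$, $\partial_x\varphi$ and $\partial_y\varphi$ reproduces \eqref{e:pde}.

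\emph{Main obstacle.} The genuinely delicate step is the middle one: converting the almost-sure statement \eqref{e:limit.f} into a deterministic recursion needs a clean interchange-of-limits argument — the Cesàro averaging together with the easy uniform bound $N_{ij}(n)/n\le1$ is the device — plus the verification that the $\beta$-step endpoint coincidence and the initial graph are asymptotically negligible. The regularity assertions follow from crude domination, and the passage to the PDE is elementary; if one is content to cite \cite{bollobas:borgs:chayes:riordan:2003} for the recursion, the lemma reduces to the regularity discussion and the generating-function computation.
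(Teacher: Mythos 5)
Your proof is correct and follows essentially the same route as the paper: take the recursion for $(p_{ij})$ established in \cite{bollobas:borgs:chayes:riordan:2003}, multiply by $x^iy^j$, sum over $i,j$, and identify $x\partial_x\varphi$ and $y\partial_y\varphi$ to recover \eqref{e:pde}. The additional material you supply (the domination argument for continuity and $C^\infty$ regularity, and the sketch of how the recursion arises from the growth dynamics) is sound but goes beyond what the paper's proof records, which simply cites the recursion and performs the generating-function rearrangement.
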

\begin{proof}
Only the form of the partial differential equation in
\eqref{e:pde}  requires justification.
The following recursive relation connecting the limiting probabilities
$(p_{ij})$ was established in
\cite{bollobas:borgs:chayes:riordan:2003},
\begin{align} 
p_{ij} =& c_1(i-1+\deltain)p_{i-1,j} - c_1(i+\deltain)p_{ij}
+ c_2(j-1+\deltaout)p_{i,j-1}
\label{e:recursion} \\
&-c_2(j+\deltaout) + \frac{\alpha}{\alpha+\gamma}\one(i=0,j=1)+  
\frac{\gamma}{\alpha+\gamma}\one(i=1,j=0)
\nonumber\end{align}
for $i,j=0,1,2,\ldots$, with the understanding that any $p$ with a
negative subscript is equal to zero.  Rearranging the terms,
multiplying both sides by $x^iy^j$ and summing up, 
we see that for $0<x,y<1$, 
\begin{align}
\sum_{i=0}^\infty \sum_{j=0}^\infty (c_1\deltain +&
c_2\deltaout +1 +c_1i +c_2j) x^i y^j   p_{ij}
= \frac{\alpha}{\alpha+\gamma}y + \frac{\gamma}{\alpha+\gamma}x
 \label{e:sums}\\
&+ c_1 \sum_{i=1}^\infty \sum_{j=0}^\infty (i-1+\deltain) x^i y^j
p_{i-1,j} 
+ c_2 \sum_{i=0}^\infty \sum_{j=1}^\infty (j-1+\deltaout) 
x^i y^j p_{i,j-1} \,.
\nonumber
\end{align}
Since
$$
\frac{\partial \varphi}{\partial x} (x,y) = \sum_{i=1}^\infty
\sum_{j=0}^\infty ix^{i-1}y^j p_{ij}, \ \ 
\frac{\partial \varphi}{\partial y} (x,y) = \sum_{i=0}^\infty
\sum_{j=1}^\infty j x^iy^{j-1}p_{ij}\,,
$$
we can rearrange the terms in \eqref{e:sums} to obtain \eqref{e:pde}. 
\end{proof}

The next theorem  gives an
explicit 
formula for the joint generating function $\varphi$ in \eqref{e:phi}.
\begin{theorem} \label{t:formula.gf}
Let
\begin{equation} \label{e:a}
a=  c_2/c_1\,,
\end{equation}
where $c_1$ and $c_2$ are given in \eqref{e:c}. Then   for $0\leq x,y\leq 1$,
\begin{align} 
\varphi(x,y) =&
 \frac{\alpha}{\alpha+\gamma} c_1^{-1} y \int_1^\infty 
z^{-(1+1/c_1)} \bigl( x+(1-x)z\bigr)^{-\deltain} \bigl(
y+(1-y)z^a\bigr)^{-(\deltaout +1)}\, dz
\label{e:formula.gf}\\
&+ \frac{\gamma}{\alpha+\gamma} c_1^{-1} x \int_1^\infty 
z^{-(1+1/c_1)} \bigl( x+(1-x)z\bigr)^{-(\deltain+1)} \bigl(
y+(1-y)z^a\bigr)^{-\deltaout }\, dz.
\nonumber
\end{align}
\end{theorem}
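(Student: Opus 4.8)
The plan is to treat \eqref{e:pde} as a first-order linear partial differential equation and integrate it along its characteristics, using the fact --- guaranteed by Lemma~\ref{l:pde} together with $(p_{ij})$ being a probability distribution --- that $\varphi$ is continuous on $[0,1]^2$ and satisfies $0\le\varphi\le1$, in order to single out the one admissible solution.

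I would first write down the characteristic system $\dot x=c_1x(1-x)$, $\dot y=c_2y(1-y)$. Both are logistic equations, so the characteristic through an interior point $(x,y)\in(0,1)^2$ at parameter $0$ is $x(r)=x/\bigl(x+(1-x)e^{-c_1r}\bigr)$, $y(r)=y/\bigl(y+(1-y)e^{-c_2r}\bigr)$; since $c_2/c_1=a$ by \eqref{e:a}, setting $z=e^{-c_1r}$ replaces $e^{-c_2r}$ by $z^{a}$ and maps $r\in(-\infty,0]$ bijectively onto $z\in[1,\infty)$, which is exactly the range of integration in \eqref{e:formula.gf}. Along such a curve, \eqref{e:pde} reduces to the scalar linear ODE
\[
\frac{d}{dr}\varphi\bigl(x(r),y(r)\bigr)+C(r)\,\varphi\bigl(x(r),y(r)\bigr)=D(r),
\]
with $C(r)=c_1\deltain\bigl(1-x(r)\bigr)+c_2\deltaout\bigl(1-y(r)\bigr)+1$ and $D(r)=\tfrac{\alpha}{\alpha+\gamma}y(r)+\tfrac{\gamma}{\alpha+\gamma}x(r)$. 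Solving by the integrating factor and then letting the lower endpoint tend to $-\infty$ is the step where the side conditions matter: every interior characteristic converges to the source corner $(0,0)$ as $r\to-\infty$, where $C$ stays bounded below by $1$, so the integrating factor evaluated there tends to $0$ while $\varphi$ remains bounded; the boundary term drops out and we are forced to
\[
\varphi(x,y)=\int_{-\infty}^{0}\exp\!\Bigl(-\int_{r}^{0}C(s)\,ds\Bigr)D(r)\,dr .
\]

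I would then evaluate the integrating factor in closed form. Direct integration of the explicit expressions for $x(s),y(s)$ gives $\int_{r}^{0}c_1\bigl(1-x(s)\bigr)\,ds=\log\bigl(x+(1-x)z\bigr)$, $\int_{r}^{0}c_2\bigl(1-y(s)\bigr)\,ds=\log\bigl(y+(1-y)z^{a}\bigr)$ and $\int_{r}^{0}ds=(\log z)/c_1$, so that $\exp\bigl(-\int_{r}^{0}C\,ds\bigr)=z^{-1/c_1}\bigl(x+(1-x)z\bigr)^{-\deltain}\bigl(y+(1-y)z^{a}\bigr)^{-\deltaout}$. Substituting this and the formula for $D(r)$ and changing variables from $r$ to $z=e^{-c_1r}$ (so $dr=-dz/(c_1z)$), the two terms of $D$ split the integral into precisely the two summands of \eqref{e:formula.gf}; convergence at $z=\infty$ is immediate since the integrand is $O\bigl(z^{-1-1/c_1}\bigr)$ with $1/c_1>0$. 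As a consistency check one reads off that the resulting $\varphi$ is continuous on $[0,1]^2$, smooth in the interior, and satisfies $\varphi(1,1)=1$, the last because at $x=y=1$ both integrals reduce to $\int_{1}^{\infty}z^{-1-1/c_1}\,dz=c_1$.

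The part needing genuine care, as opposed to bookkeeping, is the uniqueness step: the general solution of a first-order PDE like \eqref{e:pde} differs from any particular one by an arbitrary function that is constant along characteristics, and one must argue that continuity and boundedness of $\varphi$ at the source corner $(0,0)$ removes this freedom and selects the improper-integral solution above. Alternatively, one may avoid characteristics altogether by verifying directly that the right-hand side of \eqref{e:formula.gf} is bounded, continuous on $[0,1]^2$, smooth in the interior, and solves \eqref{e:pde}, and then appeal to the same uniqueness principle since $\varphi$ is the unique bounded solution. Everything downstream of that --- the logistic solution formulas, the logarithmic evaluation of the integrating factor, and the change of variables --- is routine calculation.
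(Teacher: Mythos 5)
Your proposal is correct and follows essentially the same route as the paper: both integrate the first-order linear PDE \eqref{e:pde} along its characteristics and use boundedness of the generating function as the characteristic flows back to the corner $(0,0)$ to kill the homogeneous part of the solution (the paper's version of this is showing $C_3(\theta)=0$). The only difference is cosmetic --- you parametrize the characteristics by flow time $r$, so the substitution $z=e^{-c_1r}$ lands directly on $[1,\infty)$, whereas the paper uses $x$ as the parameter along each curve $\CurveC_\theta$ and reaches the same integral after the change of variables $z=x(1-t)/\bigl(t(1-x)\bigr)$.
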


\begin{proof}
The partial differential equation in \eqref{e:pde} is a linear
equation of the form (2), p.6 in \cite{jones:1971}, and to solve it we
follow the procedure suggested {\it ibid.}. Specifically, we write the 
equation \eqref{e:pde} in the form
\begin{equation} \label{e:pde.lin}
a(x,y) \frac{\partial \varphi}{\partial x} + b(x,y) \frac{\partial
  \varphi}{\partial y}=  c(x,y)\varphi + d(x,y)\,,
\end{equation}
with 
$$
a(x,y) = c_1x(1-x), \ \ b(x,y) = c_2y(1-y)\,,
$$
$$
c(x,y) = c_1\deltain x +  c_2\deltaout y - \rho, \ \
d(x,y) = \alpha (\alpha+\gamma)^{-1} y 
+ \gamma (\alpha+\gamma)^{-1} x\,,
$$
where
$$
\rho = c_1\deltain  +  c_2\deltaout +1\,.
$$
Consider the family of characteristic curves for the differential
equation \eqref{e:pde.lin} defined by the ordinary differential
equation
$$
\frac{dy}{dx} = \frac{b(x,y)}{a(x,y)}\,.
$$
It is elementary to check that the characteristic curves form a 
one-parameter family, $\{ \CurveC_\theta, \ \theta>0\}$, with the
curve $\CurveC_\theta$ given by 
\begin{equation} \label{e:curve}
y=\frac{1}{1+\theta x^{-a}(1-x)^a}, \ 0<x<1\,.
\end{equation} 
Along each characteristic curve $\CurveC_\theta$ the function 
$u(x) = \varphi\bigl( x, y(x)\bigr), \, 0<x<1,$ satisfies the ordinary
differential equation 
\begin{equation} \label{e:ode}
\frac{du}{dx} = \frac{c(x,y)u + d(x,y)}{a(x,y)}
= u\psi_1(x) + \psi_2(x)\,,
\end{equation}
where 
$$
\psi_1(x) = \frac{c_1\deltain x + c_2\deltaout \bigl( 1+ \theta
  x^{-a}(1-x)^a\bigr)^{-1} - \rho}{c_1x(1-x)} \,,
$$
$$
\psi_2(x) = \frac{\gamma x + \alpha \bigl( 1+ \theta
  x^{-a}(1-x)^a\bigr)^{-1}}{(\alpha+\gamma) c_1x(1-x)} \,. 
$$

Let $H$ be a function satisfying 
\begin{equation} \label{e:H}
H^\prime (x) = \psi_1(x), \ 0<x<1\,,
\end{equation}
and define 
$$
A(x) = u(x) e^{-H(x)}, \ 0<x<1\,. 
$$
It follows from \eqref{e:ode} that
\begin{equation} \label{e:A}
A^\prime(x) = \psi_2(x) e^{-H(x)}, \ 0<x<1\,. 
\end{equation}
We compute the function $u$ by solving the differential equations
\eqref{e:H} and \eqref{e:A}.

To solve \eqref{e:H}, write it first in the form 
$$
H^\prime (x) = \frac{\deltain}{1-x} - \frac{\rho/c_1}{x(1-x)} 
+ \frac{c_2\deltaout/c_1}{1+ \theta x^{-a}(1-x)^a}\frac{1}{x(1-x)}\,.
$$
It is elementary to check by differentiation that
$$
\int \frac{1}{1+ \theta x^{-a}(1-x)^a}\frac{1}{x(1-x)}\, dx
= -\log (1-x) + a^{-1} \log \bigl( x^a + \theta (1-x)^a\bigr) +C_1
$$
with $C_1\in\bbr$. Therefore, for $0<x<1$, 
\begin{equation} \label{e:formula.H}
H(x) = c_1^{-1} \log(1-x) - \rho c_1^{-1} \log x + \deltaout 
\log \bigl( x^a + \theta (1-x)^a\bigr) +C_1\,, 
\end{equation}
implying that 
\begin{align*}
A^\prime(x) =& e^{-C_1} \frac{\gamma x + \alpha \bigl( 1+ \theta
  x^{-a}(1-x)^a\bigr)^{-1}}{(\alpha+\gamma)c_1 x(1-x)} (1-x)^{-1/c_1} x^{\rho/c_1} 
\bigl( x^a + \theta (1-x)^a\bigr)^{-\deltaout}\\
=& \frac{e^{-C_1}}{(\alpha+\gamma)c_1}\gamma (1-x)^{-(1+1/c_1)} x^{\deltain +1/c_1}
\bigl( 1+ \theta x^{-a}  (1-x)^a\bigr)^{-\deltaout}\\
\quad &
+ \frac{e^{-C_1}}{(\alpha+\gamma)c_1}  (1-x)^{-(1+1/c_1)}  x^{\deltain -1 +1/c_1}
\bigl( 1+ \theta x^{-a}  (1-x)^a\bigr)^{-(1+\deltaout)}\,.
\end{align*}
We can now write
\begin{align}
A(x) =& 
e^{-C_1} \frac{\gamma}{(\alpha+\gamma)c_1} \int_0^x
(1-t)^{-(1+1/c_1)} t^{\deltain +1/c_1} 
\bigl( 1+ \theta t^{-a}  (1-t)^a\bigr)^{-\deltaout}\, dt
 \label{e:formula.A} \\
&
+ e^{-C_1} \frac{\alpha}{(\alpha+\gamma)c_1} \int_0^ x
(1-t)^{-(1+1/c_1)}  t^{\deltain -1 +1/c_1} 
\bigl( 1+ \theta t^{-a}  (1-t)^a\bigr)^{-(1+\deltaout)}\, dt +C_2
\nonumber \end{align}
with $C_2\in \bbr$. Using \eqref{e:formula.H} and \eqref{e:formula.A}
we obtain the following expression for the the function 
$u(x) = \varphi\bigl( x, y(x)\bigr), \, 0<x<1$ along the
characteristic curve $\CurveC_\theta$. 
\begin{align*}
u(x) =& A(x) e^{H(x)} = \frac{\gamma}{\alpha+\gamma}c_1^{-1}
(1-x)^{1/c_1} x^{-\rho/c_1} \bigl( 1+ 
\theta x^{-a}  (1-x)^a\bigr)^{\deltaout} \\
&{}\cdot  \int_0^x (1-t)^{-(1+1/c_1)} t^{\deltain +1/c_1}
\bigl( 1+ \theta t^{-a}  (1-t)^a\bigr)^{-\deltaout}\, dt
\\
&
\quad + \frac{\alpha}{\alpha+\gamma}c_1^{-1} (1-x)^{1/c_1} x^{-\rho/c_1}
\bigl( 1+ \theta x^{-a}  (1-x)^a\bigr)^{\deltaout} {}
\\
&
\cdot \int_0^ x (1-t)^{-(1+1/c_1)}  t^{\deltain -1 +1/c_1}
\bigl( 1+ \theta t^{-a}  (1-t)^a\bigr)^{-(1+\deltaout)}\, dt
\\
&\quad+ C_3 (1-x)^{1/c_1} x^{-\rho/c_1} \bigl( 1+
\theta x^{-a}  (1-x)^a\bigr)^{\deltaout} 
\end{align*}
with $C_3=C_3(\theta)\in \bbr$. Multiply both sides of this
equation by $x^{a\deltaout + \rho/c_1}$ and let $x\to 0$. Using the
fact that the generating function is bounded, we see that $C_3=0$. 
We can now obtain an expression for the joint generating function
$\varphi$ everywhere in $(0,1)^2$ by noticing that a point $(x,y)$,
 $0<x,y<1,$ lies on the characteristic curve $\CurveC_\theta$
with
$$
\theta = \frac{(1-y)/y}{\bigl( (1-x)/x\bigr)^a}\,.
$$
We conclude that
\begin{align*}
\varphi(x,y) =& \frac{\gamma}{\alpha+\gamma}c_1^{-1}  (1-x)^{1/c_1}
x^{-\rho/c_1} \bigl( 1+ 
\theta x^{-a}  (1-x)^a\bigr)^{\deltaout} \\
&\cdot 
 \int_0^x (1-t)^{-(1+1/c_1)} t^{\deltain +1/c_1}
\left( 1+ \frac{(1-y)/y}{\bigl( (1-x)/x\bigr)^a} 
t^{-a}  (1-t)^a\right)^{-\deltaout}\, dt\\
&\quad 
+  \frac{\alpha}{\alpha+\gamma}c_1^{-1} (1-x)^{1/c_1} x^{-\rho/c_1}
\bigl( 1+ \theta x^{-a}  (1-x)^a\bigr)^{\deltaout} 
\\
&\cdot
\int_0^ x (1-t)^{-(1+1/c_1)}  t^{\deltain -1 +1/c_1}
\left( 1+ \frac{(1-y)/y}{\bigl( (1-x)/x\bigr)^a} t^{-a}
(1-t)^a\right)^{-(1+\deltaout)}\, dt \,.
\end{align*}
Changing the variable in both integrals to
$$
z=\frac{x(1-t)}{t(1-x)}
$$ 
and rearranging the terms, we obtain \eqref{e:formula.gf} for
$0<x,y<1$. Now we can extend this formula for the joint
generating function to the boundary of the square $[0,1]^2$ by
continuity. 
\end{proof}

\section{Joint regular variation of the distribution of in-degree
and out-degree} \label{sec:InOutRegVar} 

In this section we analyze the explicit form \eqref{e:formula.gf} of the joint generating
function of the limiting distribution of in-degree and 
out-degree obtained in Theorem \ref{t:formula.gf} to prove the
nonstandard joint regular variation of  in-degree and 
out-degree. We also obtain an expression for the density of the tail
measure.

We start by writing the joint generating function in
\eqref{e:formula.gf} as
\begin{equation} \label{e:split.gf}
\varphi(x,y) = \frac{\gamma}{\alpha+\gamma}  x \varphi_1(x,y) 
+ \frac{\alpha}{\alpha+\gamma}  y \varphi_2(x,y) \,,
\end{equation}
with
\begin{align} 
\varphi_1(x,y) =& c_1^{-1}\int_1^\infty 
z^{-(1+1/c_1)} \bigl( x+(1-x)z\bigr)^{-(\deltain+1)} \bigl(
y+(1-y)z^a\bigr)^{-\deltaout }\, dz\,, \label{e:phi1}\\
\varphi_2(x,y) =& c_1^{-1}\int_1^\infty 
z^{-(1+1/c_1)} \bigl( x+(1-x)z\bigr)^{-\deltain} \bigl(
y+(1-y)z^a\bigr)^{-(\deltaout +1)}\, dz \label{e:phi2}
\end{align}
for $0\leq x,y\leq 1$.  Each of these functions $\varphi_i$
is a mixture of a product of negative binomial generating functions of
possibly fractional order. 
On some probability space we can find nonnegative integer-valued
random variables $X_j,\, Y_j, \ j=1,2$ such that
$$
\varphi_j(x,y) = E\bigl( x^{X_j}y^{Y_j}\bigr), \ 0\leq x,y\leq 1, \
j=1,2\,.
$$
If $(I,O)$ is a random vector with generating function given in
\eqref{e:split.gf}, then we can represent in distribution $(I,O)$ as
\begin{equation}\label{eq:io.rep}
(I,0)\stackrel{d}{=} B(1+X_1,Y_1)+(1-B) (X_2,1+Y_2),
\end{equation}
where $B$ is a Bernoulli switching variable independent of $X_j,Y_j,\,
j=1,2$ with
$$P[B=1]=1-P[B=0]=\frac{\gamma}{\alpha+\gamma}.$$

Theorem \ref{t:two.measures} below shows that each of the random
vectors $\bigl( X_j,\, Y_j\bigr)$, $j=1,2$, has a bivariate regularly varying distribution. The
decomposition \eqref{e:split.gf} then gives the
joint regular variation of in-degree and  out-degree. 

\begin{theorem} \label{t:two.measures} 
Let $\alphain$ and $\alphaout$ be given by
\eqref{e:exponents}. Then for each $j=1,2$ there is a Radon
measure $V_j$ on $[0,\infty]^2\setminus \{ {\mathbf 0}\}$ such that 
\begin{equation} \label{e:regvar.j}
hP\Bigl( \bigl( h^{-1/(\alphain-1)}X_j, \,
h^{-1/(\alphaout-1)}Y_j\bigr)\in\cdot\Bigr) \vrightarrow V_j (\cdot),
\end{equation}
as $h\to\infty$ vaguely in $[0,\infty]^2\setminus \{ {\mathbf 0}\}$. 
Furthermore, $V_1$  and $V_2$ concentrate on $(0,\infty)^2$ where they
have Lebesgue densities  given, respectively,  by 
\begin{align} 
f_1(x,y) =& c_1^{-1} \bigl(
\Gamma(\deltain+1)\Gamma(\deltaout)\bigr)^{-1}
x^{\deltain}y^{\deltaout-1}
\int_0^\infty z^{-(2+1/c_1+\deltain +a\deltaout)} e^{-(x/z+y/z^a)}\,
dz \label{e:density}
\intertext{and}
f_2(x,y) =& c_1^{-1} \bigl(
\Gamma(\deltain)\Gamma(\deltaout+1)\bigr)^{-1}
x^{\deltain-1}y^{\deltaout}
\int_0^\infty z^{-(1+a+1/c_1+\deltain +a\deltaout)} e^{-(x/z+y/z^a)}\,
dz\,. \label{e:density.2}
\end{align}
Therefore, a random vector $\bigl( I,O)$ with the joint
probabilities given by $(p_{ij})$ in \eqref{e:limit.p} satisfies 
\begin{equation}\label{e:ganzMegilla}
hP\Bigl( \bigl( h^{-1/(\alphain-1)}I, \,
h^{-1/(\alphaout-1)}O\bigr)\in\cdot\Bigr) \vrightarrow 
\frac{\gamma}{\alpha+\gamma} V_1 (\cdot) +
\frac{\alpha}{\alpha+\gamma} V_2(\cdot) 
\end{equation}
as $h\to\infty$ vaguely in $[0,\infty]^2\setminus \{ {\mathbf 0}\}$. 
\end{theorem}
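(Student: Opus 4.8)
The plan is to exploit the structure visible in \eqref{e:phi1}--\eqref{e:phi2}: each $\varphi_j$ is the probability generating function of a \emph{Pareto mixture of two independent negative binomials}, from which the limit measure $V_j$ and its density can be computed directly, without transforms. \textbf{Step 1 (the mixture representation).} For $j=1,2$ put $r^{\mathrm{in}}_1=\deltain+1$, $r^{\mathrm{out}}_1=\deltaout$, $r^{\mathrm{in}}_2=\deltain$, $r^{\mathrm{out}}_2=\deltaout+1$. Since for every $r>0$
\[
\bigl(x+(1-x)z\bigr)^{-r}=\Bigl(\frac{1/z}{1-(1-1/z)x}\Bigr)^{r}
\]
is the generating function of the negative binomial law $\mathrm{NB}(r,1/z)$, and since $z\mapsto c_1^{-1}z^{-(1+1/c_1)}$ is, on $z\ge 1$, the density of a Pareto variable $Z$ with $P(Z>t)=t^{-1/c_1}$, the function $\varphi_j$ is the generating function of a vector $(X_j,Y_j)$ which, conditionally on $Z=z$, has independent coordinates $X_j\sim\mathrm{NB}(r^{\mathrm{in}}_j,1/z)$ and $Y_j\sim\mathrm{NB}(r^{\mathrm{out}}_j,1/z^{a})$. (When $\deltain=0$ or $\deltaout=0$ the corresponding factor degenerates at $0$ and the formulas below are read with $1/\Gamma(0)=0$.)

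\textbf{Step 2 (reduction to a functional limit).} By \eqref{e:exponents} and \eqref{e:c} one has $1/(\alphain-1)=c_1$ and $1/(\alphaout-1)=c_2=ac_1$; writing $h=t^{1/c_1}$, the convergence \eqref{e:regvar.j} is equivalent to $t^{1/c_1}P((X_j/t,Y_j/t^{a})\in\cdot\,)\vrightarrow V_j$, and it suffices to verify $t^{1/c_1}E[g(X_j/t,Y_j/t^{a})]\to\int g\,dV_j$ for every nonnegative $g\in C_c(\E)$. Conditioning on $Z$ and substituting $z=ts$ cancels all powers of $t$ outside the integral:
\[
t^{1/c_1}E\bigl[g(X_j/t,Y_j/t^{a})\bigr]=\int_{1/t}^{\infty}c_1^{-1}s^{-(1+1/c_1)}\,E\bigl[g(X_j/t,Y_j/t^{a})\mid Z=ts\bigr]\,ds .
\]
For fixed $s>0$ the parameter $ts\to\infty$, so the classical negative-binomial--to--gamma limit ($pW\Rightarrow\Gamma(r,1)$ as $p\downarrow 0$ for $W\sim\mathrm{NB}(r,p)$), applied to each conditionally independent coordinate, gives $(X_j/t,Y_j/t^{a})\mid Z=ts\Rightarrow(sG^{\mathrm{in}}_j,s^{a}G^{\mathrm{out}}_j)$ with $G^{\mathrm{in}}_j\sim\Gamma(r^{\mathrm{in}}_j,1)$ and $G^{\mathrm{out}}_j\sim\Gamma(r^{\mathrm{out}}_j,1)$ independent. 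Passing to the limit under the integral identifies $V_j$ as the image of $c_1^{-1}s^{-(1+1/c_1)}\,ds$ on $(0,\infty)$ under $s\mapsto(sG^{\mathrm{in}}_j,s^{a}G^{\mathrm{out}}_j)$; its scaling property and concentration on $(0,\infty)^{2}$ are immediate from this description.

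\textbf{Step 3 (the density and the vector $(I,O)$).} To extract the Lebesgue density of $V_j$, change variables $(s,g_1,g_2)\mapsto(x,y)=(sg_1,s^{a}g_2)$, keeping $s$ and integrating it out at the end; the Jacobian contributes $s^{1+a}$, and collecting powers of $s$ produces the exponents $2+1/c_1+\deltain+a\deltaout$ for $j=1$ and $1+a+1/c_1+\deltain+a\deltaout$ for $j=2$, while the two gamma densities give the prefactors $x^{\deltain}y^{\deltaout-1}/\bigl(\Gamma(\deltain+1)\Gamma(\deltaout)\bigr)$, resp.\ $x^{\deltain-1}y^{\deltaout}/\bigl(\Gamma(\deltain)\Gamma(\deltaout+1)\bigr)$, together with the factor $e^{-(x/s+y/s^{a})}$; this reproduces \eqref{e:density}--\eqref{e:density.2}. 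Finally \eqref{e:ganzMegilla} follows from \eqref{eq:io.rep} by conditioning on the Bernoulli variable $B$: translating a coordinate by the constant $1$ is asymptotically negligible under the vanishing scalings $h^{-c_1},h^{-c_2}\to 0$ on sets bounded away from $\bzero$, so $(1+X_1,Y_1)$ and $(X_2,1+Y_2)$ also satisfy \eqref{e:regvar.j} with limits $V_1$ and $V_2$, and since $B$ is independent of the $(X_j,Y_j)$ the mixture weights $\gamma/(\alpha+\gamma)$ and $\alpha/(\alpha+\gamma)$ carry straight through.

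\textbf{Main obstacle.} The delicate point is the dominated-convergence justification in Step 2: one must majorize $s\mapsto c_1^{-1}s^{-(1+1/c_1)}E[g(X_j/t,Y_j/t^{a})\mid Z=ts]$ by a fixed integrable function uniformly in $t$. Near $s=\infty$ this is automatic since $1/c_1>0$ and $g$ is bounded. Near $s=0$, since $g$ vanishes in a neighbourhood of the origin, Markov's inequality together with the moment bound $E[\mathrm{NB}(r,p)^{k}]\le C_{r,k}\,p^{-k}$ yields $E[g(X_j/t,Y_j/t^{a})\mid Z=ts]\le C s^{k}$ for arbitrary $k$, so $s^{-(1+1/c_1)}\min(1,Cs^{k})$ is integrable once $k>1/c_1$; this supplies the majorant. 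The remaining ingredients---recognising the negative binomial and Pareto pieces, the negative-binomial--to--gamma weak limit, and the change of variables---are routine bookkeeping.
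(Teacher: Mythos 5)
Your proposal is correct and follows essentially the same route as the paper: the same representation of $(X_j,Y_j)$ as a Pareto($1/c_1$) mixture of independent negative binomials, the same negative-binomial-to-gamma weak limit, the same moment bound $E[T_\delta(p)^k]\le c(\delta,k)p^{-k}$ to control the region where $Z$ is small, and the same direct computation of the densities. The only difference is presentational: the paper packages the limit as a joint vague convergence in $M_+\bigl((0,\infty]\times[0,\infty]^2\bigr)$ followed by the continuous map $\chi(x,(y_1,y_2))=(x,(xy_1,x^ay_2))$ and a Slutsky argument, whereas you condition on $Z$ and invoke dominated convergence with the Markov-inequality majorant --- these are the same estimates arranged differently.
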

\begin{proof}
It is enough to prove \eqref{e:regvar.j} and
\eqref{e:density}. We treat the case $j=1$. The case $j=2$ is
 analogous and is omitted.
 
Let $T_\delta (p)$ be a negative binomial integer valued random
variable with parameters $\delta >0$ and $p\in (0,1)$. We abbreviate
this as $NB(\delta,p)$. The generating function of $T_\delta (p)$ is
$$Es^{T_\delta (p)}=  (s+(1-s)p^{-1})^{-\delta}.$$
It is well known and elementary to prove by switching to Laplace
transforms
 that as $p\downarrow 0$,
$$pT_\delta(p) \Rightarrow \Gamma_\delta$$ 
where $\Gamma_\delta $ is a Gamma random variable with distribution
$F_\delta (x)$ and  density
$$F'_\delta(x)= \frac{e^{-x}x^{\delta -1}}{\Gamma (\delta)},\quad
x>0.$$

Now suppose $\{T_{\delta_1}(p),\,p \in (0,1)\} $ and $\{\tilde
T_{\delta_2}(p),\,p \in (0,1)\} $ are two independent families of $NB$
random variables. 
We can represent the mixture in \eqref{e:phi1} as 
$$(X_1,Y_1)=\bigl(T_{\deltain +1} (Z^{-1}), \tilde T_{\deltaout }
(Z^{-a}) \bigr),$$
where $Z$ is a Pareto random variable on $[1,\infty)$ with index
$c_1^{-1}$, independent of the $NB$ random variables. 
To ease writing, we set $\delta_1=\deltain +1$ and
$\delta_2=\deltaout$.

Define the measure $\nu_{c} $ on $(0,\infty]$  by
$\nu_c(x,\infty]=x^{-c}, \,x>0$. We now claim, as $h\to\infty$, in
$M_+( (0,\infty]\times [0,\infty]^2)$,
\begin{equation}\label{eq:17}
hP\Bigl[\Bigl( \frac{Z}{h^{c_1}}, \bigl( Z^{-1}T_{\delta_1} (Z^{-1}),
Z^{-a}  \tilde T_{\delta_2} (Z^{-a})
\bigr)
\Bigr) \in \cdot \,\Bigr]
\stackrel{v}{\to}
\nu_{c_1^{-1}} \times P[\Gamma_{\delta_1} \in
\cdot \,]\times P[\Gamma_{\delta_2 } \in \cdot \,].
\end{equation}
To prove this, suppose $x>0$ and let $g(u,v)$ be a function bounded
and continuous on $[0,\infty]^2 $
 and it suffices to show,
\begin{equation}\label{eq:18}
hE\Bigl(1_{[Z/h^{c_1} >x] }g\bigl( Z^{-1}T_{\delta_1} (Z^{-1}), Z^{-a}T_{\delta_2}
(Z^{-a}) \bigr)\Bigr) \to x^{-c_1^{-1}} E\bigl(g(\Gamma_{\delta_1} ,\tilde
\Gamma_{\delta_2})\bigr)\end{equation}
where $\Gamma_{\delta_1}\independent \tilde \Gamma_{\delta_2}$.

Observe  as $p\downarrow 0$,
$$E\Bigl(g\bigl( pT_{\delta_1} (p), p^a \tilde T_{\delta_2}
(p^{a}) \bigr)\Bigr) \to   E\bigl(g(\Gamma_{\delta_1} ,\tilde
\Gamma_{\delta_2})\bigr)
$$
and so, given $\epsilon>0$, there exists $\eta>0$ such that 
\begin{equation}\label{eq:smalldiff}
\sup_{p<\eta} \bigl|E\Bigl(g\bigl( pT_{\delta_1} (p), p^a \tilde T_{\delta_2}
(p^{a}) \bigr)\Bigr) -  E\bigl(g(\Gamma_{\delta_1} ,\tilde
\Gamma_{\delta_2})\bigr) \bigr| <\epsilon.
\end{equation}
Bound the difference between the LHS and RHS of \eqref{eq:18} by
\begin{align*}
\Bigl|hE\Bigl(1_{[Z/h^{c_1} >x] }&g\bigl( Z^{-1}T_{\delta_1} (Z^{-1}),
Z^{-a}  \tilde T_{\delta_2}
(Z^{-a}) \bigr)\Bigr) 
-
hE(1_{[Z/h^{c_1} >x] }
  E\bigl(g(\Gamma_{\delta_1} ,\tilde
\Gamma_{\delta_2})\bigr) \Bigr|\\
&+\Bigl| h E(1_{[Z/h^{c_1} >x] }
  E\bigl(g(\Gamma_{\delta_1} ,\tilde
\Gamma_{\delta_2})\bigr)  -
x^{-c_1^{-1}} E\bigl(g(\Gamma_{\delta_1} ,\tilde
\Gamma_{\delta_2})\bigr) \Bigr|=A+B,
\end{align*} where $B=o(1)$ and is henceforth neglected.
Write $E^Z(\cdot)=E(\cdot|Z)$ for the conditional expectation and
bound $A$ by
\begin{equation}\label{e:forgoing}
E\Bigl(h1_{[Z/h^{c_1} >x] } \bigl| E^{Z}
g\bigl( Z^{-1}T_{\delta_1} (Z^{-1}), Z^{-a} \tilde T_{\delta_2}
(Z^{-a}) \bigr) -  E\bigl(g(\Gamma_{\delta_1} ,\tilde
\Gamma_{\delta_2})\bigr) 
\bigr| \Bigr).
\end{equation}
As soon as $h$ is large enough so that $h^{-{c_1}} x^{-1}<\eta$,
\eqref{e:forgoing}
 bounded
  by
$$E\bigl(h1_{[Z/h^{c_1} >x] } \bigr) \epsilon \to \epsilon
x^{-c_1^{-1}}. $$ Let $\epsilon \to 0$ and we have verified
\eqref{eq:18}   and therefore \eqref{eq:17}.

The next step is to apply a mapping to the convergence in
\eqref{eq:17}. Define $\chi :(0,\infty]\times [0,\infty]^2\mapsto
(0,\infty]\times [0,\infty]^2$ by
$$\chi \bigl(x,(y_1,y_2)\bigr)=\bigl(x,(xy_1,x^ay_2)\bigr).$$
This transformation satisfies the compactness condition in
\cite[Proposition 5.5, page 
  141]{resnick:2007} or the bounded away condition  in
\cite[Section 2.2]{lindskog:resnick:roy:2013}. Following the product
discussion of Example 3.3 in \cite{lindskog:resnick:roy:2013} or
\cite[Corollary 2.1, page 682]{maulik:resnick:rootzen:2002}, we apply
 $\chi $ to the convergence in \eqref{eq:17} which yields
 in $M_+(
(0,\infty]\times [0,\infty]^2)$, as $h\to\infty$,
\begin{equation}\label{eq:almost}
hP\Bigl[\Bigl( \frac{Z}{h^{c_1}}, \bigl( \frac{T_{\delta_1} (Z^{-1})}{h^{c_1}},
\frac{ \tilde T_{\delta_2} (Z^{-a})}{h^{c_2}}
\bigr)
\Bigr) \in \cdot \,\Bigr]
\stackrel{v}{\to}
\Bigl(\nu_{c_1^{-1}} \times P[\Gamma_{\delta_1} \in
\cdot \,]\times P[\Gamma_{\delta_2 } \in \cdot \,]\Bigr)\circ \chi ^{-1} (\cdot),
\end{equation}
where we used the fact that $ac_1=c_2$.

We must extract from \eqref{eq:almost} the desired convergence in
$M_+([0,\infty]^2\setminus \{\bzero\})$,
\begin{equation}\label{eq:desired}
hP\Bigl[\Bigl( \frac{T_{\delta_1} (Z^{-1})}{h^{c_1}},
\frac{ \tilde T_{\delta_2} (Z^{-a})}{h^{c_2}}
\Bigr) \in \cdot \,\Bigr]
\stackrel{v}{\to}
\Bigl(\nu_{c_1^{-1}} \times P[\Gamma_{\delta_1} \in 
\cdot \,]\times P[\Gamma_{\delta_2 } \in \cdot \,]\Bigr)\circ \chi ^{-1}
((0,\infty]\times (\cdot)).
\end{equation}
Assuming  \eqref{eq:desired}, we evaluate
the convergence in \eqref{eq:desired} on a set of the form
$(x,\infty]\times (y,\infty]$ for $x>0,\,y>0$ to get
\begin{align*}
hP\Bigl[ \frac{T_{\delta_1} (Z^{-1})}{h^{c_1}}>x,
\frac{ \tilde T_{\delta_2} (Z^{-a})}{h^{c_2}}>y
\Bigr]
\to &
\iiint_{(u,v,w): uv>x,u^aw>y} \nu_{c_1^{-1}} (du) F_{\delta_1}
(dv)F_{\delta_2} (dw)\\
=&\int_0^\infty \bar F_{\delta_1} ( x/u)  \bar F_{\delta_1}
({y}/{u^a}) \nu_{c_1^{-1}} (du) .
\end{align*}
The right side is the limit measure of the distribution of  $(X_1,Y_1) $ evaluated on
$(x,\infty]\times (y,\infty]$ for $x>0,\,y>0$. Differentiating first with
respect to $x$ and then with respect to $y$ yields after some algebra
the limit measure's density $f_1(x,y)$ in \eqref{e:density}. 

To prove that \eqref{eq:desired} can be obtained from
\eqref{eq:almost}, we need the following result about negative
binomial random variables whose proof is deferred. Suppose $T_\delta
(p) $ is $NB(\delta,p).$ For any $\delta>0$, $k=1,2,\dots $ there is
$c(\delta,k) \in (0,\infty)$ such that
\begin{equation}\label{eq:nbbound}
E\bigl(T_\delta(p)\bigr)^k \leq c(\delta,k) p^{-k} \ \ \text{for all $0<p<1$.}
\end{equation}

Suppose $g:[0,\infty]^2 \setminus \{\bzero\} \mapsto [0,\infty)$ is
continuous, bounded by $\|g\|$ with compact support in
$([0,\epsilon]\times [0,\epsilon])^c$ for some $\epsilon>0.$ Using a
Slutsky style argument, \eqref{eq:almost} implies \eqref{eq:desired}
if
\begin{align*}
0=&\lim_{x\to 0}\limsup_{h\to\infty} 
\Bigl|
hE1_{[Z/h^{c_1} \geq x]} g\bigl(T_{\delta_1}(Z^{-1})/h^{c_1}, \tilde
T_{\delta_2}(Z^{-a})/h^{c_2} \bigr)
-
hEg\bigl(T_{\delta_1}(Z^{-1})/h^{c_1}, \tilde
T_{\delta_2}(Z^{-a})/h^{c_2} \bigr)
\Bigr|\\
=&\lim_{x\to 0}\limsup_{h\to\infty} 
hE1_{[Z/h^{c_1} \leq x]} g\bigl(T_{\delta_1}(Z^{-1})/h^{c_1}, \tilde
T_{\delta_2}(Z^{-a})/h^{c_2} \bigr).
\end{align*}
Keeping in mind the support of $g$, the previous
  expectation is bounded by
$$\|g\| hP\bigl[Z \leq h^{c_1}x, [T_{\delta_1}(Z^{-1})/h^{c_1} >\epsilon] \cup
[T_{\delta_2}(Z^{-a})/h^{c_2} >\epsilon]  \bigr].
$$
Bounding the probability of the union by the sum of two probabilities, we
show how to deal with the first since the second is analogous. Then
neglecting the factor $\|g\|$ we have
\begin{align*}
hP\bigl[Z \leq h^{c_1}x,& T_{\delta_1}(Z^{-1})/h^{c_1} >\epsilon]
=hE\Bigl( 1_{[Z\leq h^{c_1}x]} P\Bigl [T_{\delta_1}(Z^{-1})/h^{c_1}
>\epsilon \Big|Z\Bigr]\Bigr)\\
\intertext{and  picking $k>c_1^{-1}$ and using \eqref{eq:nbbound} we
  get the bound}
\leq & hE\bigl( 1_{[Z\leq h^{c_1}x]} c(\delta_1,k) (Z/h^{c_1} )^k
\epsilon^{-k}\\
=& c(\delta_1,k)\epsilon^{-k}\int_0^x u^kh P[ Z/h^{c_1} \in du]\\
\intertext{and by Karamata's theorem or direct calculation, as
  $h\to\infty$ we get the limit}
=&c(\delta_1,k)\epsilon^{-k}\frac{
c_1^{-1}
}
{k-c_1^{-1}} x^{k-{c_1}^{-1}} 
\end{align*}
which converges to $0$ as $x\to 0$ as desired. 

Finally we verify \eqref{eq:nbbound}. Begin with $\delta =1$ so
$T_1(p)$ is geometric with success probability $p$. It is enough to
prove that for some constant $C(k) \in (0,\infty)$,
\begin{equation}\label{eq:89}
E\Bigl(
\prod_{j=0}^{k-1}(T_1(p) -j) \Bigr) \leq C(k)p^{-k}.
\end{equation}
Differentiating the generating function, we obtain,
\begin{equation}\label{eq:99}
E\Bigl(
\prod_{j=0}^{k-1}(T_1(p) -j) \Bigr) =k! (1-p)^k p^{-k} \leq k! p^{-k}.
\end{equation}

Next, for integer $\delta =1,2,\dots $,  and independent copies
$\tilde T_{1,1}(p),\tilde T_{1,2}(p),\dots,$ of
$T_1(p)$ random variables, we have
\begin{align*}
E\bigl( T_\delta(p)\bigr)^k =& E\bigl(\tilde T_{1,1}(p)+\tilde T_{1,2}(p)
+\ldots + \tilde T_{1,\delta}(p)\bigr)^k\\
\intertext{and applying the $c_r$ inequality in \cite[p. 177]
{loeve:1977} gives}
\leq &
\delta^{k-1}E\bigl(T_1(p)^k\bigr) \leq \delta^{k-1} C(k)p^{-k}.
\end{align*}

Finally, for any $\delta>0$,
\begin{align*}
E\bigl( T_\delta (p)\bigr)^k \leq  E\bigl( T_{\lceil\delta\rceil}
(p)\bigr)^k 
\leq \lceil \delta \rceil ^{k-1} C(k) p^{-k},
\end{align*}
proving \eqref{eq:nbbound} and completing the proof.
\end{proof}

\begin{remark}
{\rm
A change of variables in the integrals in \eqref{e:density} and
\eqref{e:density.2} shows that the random vector $(I,O)$ is bivariate
regular varying with marginal exponents $\alphain-1$ and $\alphaout-1$
accordingly, and with tail measure having density of the form
$$
f(x,y) = c_1^{-1} 
\frac{\gamma/(\alpha+\gamma)}{\Gamma(\deltain+1)\Gamma(\deltaout)} 
x^{\deltain}y^{\deltaout-1}\int_0^\infty t^{1/c_1+\deltain
  +a\deltaout} e^{-(xt+yt^a)}\, dt 
$$
\begin{equation} \label{e:summary.density}
 + c_1^{-1} 
\frac{\alpha/(\alpha+\gamma)}{\Gamma(\deltain)\Gamma(\deltaout+1)}
x^{\deltain-1}y^{\deltaout} \int_0^\infty t^{a-1+1/c_1+\deltain 
  +a\deltaout} e^{-(xt+yt^a)}\, dt 
\end{equation}
for $0<x,y<1$. 
}
\end{remark}

The powers of $h$ used in the scaling functions in \eqref{e:regvar.j}
are, in general, not equal and thus the regular variation in
\eqref{e:ganzMegilla} is non-standard.  However, as the
  scaling functions are pure powers, the vector 
 $(I^a, O)$ is standard regularly varying. One can
then  transform to the familiar polar coordinates. We
consider the alternative transformation $(I^a,O) \mapsto (O/I^a, I)$
which gives the immediate conclusion by Theorem \ref{t:two.measures} that
out-degree is roughly proportional to a  power of the in-degree
when either degree is large. We calculate the
limiting density of  ratio $R:=O/I^a $ given $I$ is large.
\begin{corollary} \label{c:ratio}
As $m\to\infty$,
the conditional distribution of the ratio $O/I^a$ given that $I>m$
converges to a distribution $F_R$ on $(0,\infty)$  
with density
\begin{equation} \label{e:density.ratio}
f_R(r) = \theta_1 r^{\deltaout-1}I_1(r) + \theta_2 r^{\deltaout} I_2(r),
\ r>0\,,
\end{equation}
where
$$
I_1(r) = \int_0^\infty t^{1/c_1+\deltain +a\deltaout} e^{-(t+rt^a)}\,
dt\,,\quad
I_2(r) = \int_0^\infty  t^{a-1+1/c_1+\deltain +a\deltaout} e^{-(t+rt^a)}\,
dt\,,
$$
and
$$
\theta_1=\frac{\gamma}{\Gamma(\deltain+1)\Gamma(\deltaout)D}\,,
\quad
\theta_2=\frac{\alpha}{\Gamma(\deltain)\Gamma(\deltaout+1)D}\,,
$$
with
$$
D= \gamma\frac{\Gamma(1/c_1+\deltain+1)}{\Gamma(\deltain+1)}
+ \alpha \frac{\Gamma(1/c_1+\deltain)}{\Gamma(\deltain)}\,.
$$
\end{corollary}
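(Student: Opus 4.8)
The plan is to reduce the statement to the bivariate regular variation of Theorem~\ref{t:two.measures} evaluated on a single scaling‑invariant family of sets. First I would record the bookkeeping identities $\alphain-1=1/c_1$ and $\alphaout-1=1/c_2=1/(ac_1)$, which are immediate from \eqref{e:exponents}, \eqref{e:c} and \eqref{e:a}, so that \eqref{e:ganzMegilla} reads
\[
hP\Bigl[\bigl(h^{-c_1}I,\,h^{-c_2}O\bigr)\in\cdot\Bigr]\ \vrightarrow\ \nu:=\tfrac{\gamma}{\alpha+\gamma}V_1+\tfrac{\alpha}{\alpha+\gamma}V_2
\]
vaguely in $[0,\infty]^2\setminus\{\bzero\}$, where $\nu$ concentrates on $(0,\infty)^2$ with Lebesgue density $f=\tfrac{\gamma}{\alpha+\gamma}f_1+\tfrac{\alpha}{\alpha+\gamma}f_2$, with $f_1,f_2$ as in \eqref{e:density}, \eqref{e:density.2}. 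The conditional law of $R=O/I^a$ given $I>m$ will be obtained as a ratio of two probabilities, each of which converges once multiplied by $h$.

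The key observation is that the event $\{O\le rI^a\}$ is invariant under the scaling: with $h=m^{1/c_1}$, so that $h^{c_1}=m$ and, since $ac_1=c_2$, also $h^{c_2}=m^a$, one has
\[
\{O/I^a\le r,\ I>m\}=\Bigl\{\bigl(h^{-c_1}I,\,h^{-c_2}O\bigr)\in A_r\Bigr\},\qquad A_r:=\{(x,y):x>1,\ 0\le y\le rx^a\}.
\]
Each $A_r$, and also $(1,\infty]\times[0,\infty]$, is bounded away from $\bzero$, hence relatively compact in $[0,\infty]^2\setminus\{\bzero\}$, and has topological boundary contained in a finite union of lower‑dimensional sets, hence of $\nu$‑measure zero since $\nu$ is absolutely continuous by Theorem~\ref{t:two.measures}. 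Therefore, as $m\to\infty$,
\[
hP[O/I^a\le r,\ I>m]\to\nu(A_r),\qquad hP[I>m]\to\nu\bigl((1,\infty]\times[0,\infty]\bigr),
\]
and dividing gives $P[O/I^a\le r\mid I>m]\to F_R(r):=\nu(A_r)\big/\nu\bigl((1,\infty]\times[0,\infty]\bigr)$. Integrating $f_1,f_2$ over $y\in(0,\infty)$ by elementary Gamma integrals and then using $\int_1^\infty x^{-1-1/c_1}\,dx=c_1$ shows $\nu\bigl((1,\infty]\times[0,\infty]\bigr)=D/(\alpha+\gamma)$, with $D$ as in the statement, so $F_R(r)=(\alpha+\gamma)\nu(A_r)/D$; letting $r\downarrow0$ and $r\uparrow\infty$ (continuity of $\nu$ along monotone sets) shows $F_R$ is a genuine distribution on $(0,\infty)$, and since it will turn out to be absolutely continuous the convergence holds for every $r>0$, which is the asserted convergence in distribution.

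It then remains to identify $f_R=F_R'$. Writing $\nu(A_r)=\int_1^\infty\!\int_0^{rx^a}f(x,y)\,dy\,dx$ and differentiating under the integral in $r$ gives
\[
f_R(r)=\tfrac{\alpha+\gamma}{D}\int_1^\infty x^a f(x,rx^a)\,dx=\tfrac1D\int_1^\infty x^a\bigl(\gamma f_1+\alpha f_2\bigr)(x,rx^a)\,dx.
\]
For the $f_1$‑term I would insert \eqref{e:density}, substitute $x=zs$ in the outer integral so that the exponential becomes $e^{-(s+rs^a)}$, and interchange the order of the $z$‑ and $s$‑integrations; the inner integral $\int_{1/s}^\infty z^{-(1+1/c_1)}\,dz=c_1 s^{1/c_1}$ collapses everything to $\bigl(\Gamma(\deltain+1)\Gamma(\deltaout)\bigr)^{-1}r^{\deltaout-1}I_1(r)$. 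The $f_2$‑term is handled identically from \eqref{e:density.2} and yields $\bigl(\Gamma(\deltain)\Gamma(\deltaout+1)\bigr)^{-1}r^{\deltaout}I_2(r)$. Collecting these with the prefactor $1/D$ produces exactly \eqref{e:density.ratio}; as a consistency check, the same Fubini computation with the order of integration reversed gives $\int_0^\infty f_R=1$ and re‑derives the displayed formula for $D$.

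The only genuinely delicate step is the passage from vague convergence to convergence of the conditional distribution: one must verify that $A_r$ and $(1,\infty]\times[0,\infty]$ are relatively compact $\nu$‑continuity sets in $[0,\infty]^2\setminus\{\bzero\}$ — both being easy consequences of the absolute continuity of $\nu$ and of these sets being bounded away from the origin — and that the limiting denominator $D/(\alpha+\gamma)$ is strictly positive and finite, so the ratio of limits is legitimate. The several Fubini interchanges in the density computation involve only nonnegative integrands and so require no further justification.
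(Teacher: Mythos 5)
Your proposal is correct and follows essentially the same route as the paper's proof: both express $P[O/I^a\le r\mid I>m]$ as a ratio of quantities that converge, by Theorem \ref{t:two.measures}, to the measure $\frac{\gamma}{\alpha+\gamma}V_1+\frac{\alpha}{\alpha+\gamma}V_2$ of the scaling-invariant sets $\{(x,y):x>1,\ y\le rx^a\}$ and $\{(x,y):x>1\}$, and then evaluate these via the Lebesgue density \eqref{e:summary.density} and an elementary change of variables. Your extra care with the continuity-set verification, the explicit evaluation of the denominator as $D/(\alpha+\gamma)$, and the check $\int_0^\infty f_R(r)\,dr=1$ merely make explicit what the paper leaves implicit.
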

\begin{proof}
Let $h_m=m^{\alphain-1}$. Notice that for every $\lambda>0$,
\begin{align*}
P\Bigl( O/I^a\leq \lambda \Big| I>m\Bigr)
&= \frac{ h_m P\Bigl( h_m^{-1/(\alphain-1)}I>1, \,
  h_m^{-1/(\alphaout-1)}O/\bigl( h_m^{-1/(\alphain-1)}I\bigr)^a\leq
  \lambda\Bigr)}{h_m P\bigl( h_m^{-1/(\alphain-1)}I>1\bigr)} \\
&\to \frac{(\gamma V_1+ \alpha V_2)\bigl( \bigl\{ (x,y):\, x>1, \,
  y/x^a\leq \lambda\bigr\}\bigr)}{(\gamma V_1+ \alpha V_2)\bigl(
    \bigl\{ (x,y):\, x>1 \bigr\}\bigr)}
\end{align*}
as $m\to\infty$ by Theorem \ref{t:two.measures}. 
The numerator of this ratio can be  rewritten as
$$
\int\int_{x>1, \,  y/x^a\leq \lambda} f(x,y)\, dxdy\,,
$$
and the same can be done to the denominator in this ratio. Using the
density $f$ in \eqref{e:summary.density} and performing an
elementary change of variable shows that the ratio can be written in
the form
$$
\int_0^\lambda f_R(r)\, dr\,,
$$
with $f_R$ as in \eqref{e:density.ratio}. This completes the proof. 
\end{proof}
 
\subsection{Plots, simulation, iteration.}\label{subsec:plots} For
fixed
values of $(\alphain,\alphaout)$, we investigate how
 the dependence structure of $(I,O)$ in \eqref{eq:io.rep} depends on
 the remaining parameters.
We generate plots of $f_R (r)$ and the spectral density for various values of the input parameters using the
explicit formulae and 
compare such plots to histograms obtained by network simulation and
iteration of \eqref{e:recursion}.

\subsubsection{The distribution of $R$.}\label{subsubsec:R}
We fix two values of $(\alphain, \alphaout)$, namely $(7,5)$ and $(5,7)$, and
then plot $f_R(r)$ for several values of the remaining
parameters to see the variety of possible shapes.
 Since $\alpha+\beta+\gamma=1$, fixing values for
$(\alpha,\gamma)$ also determines $\beta$ and because of \eqref{e:exponents},
assuming values for $\alphain, \alphaout, \alpha,\gamma$ determine
values for  $\deltain, \deltaout.$
The density plots are in Figure \ref{fig:amyplots}.
\begin{figure}[t]
\includegraphics[width=3in]{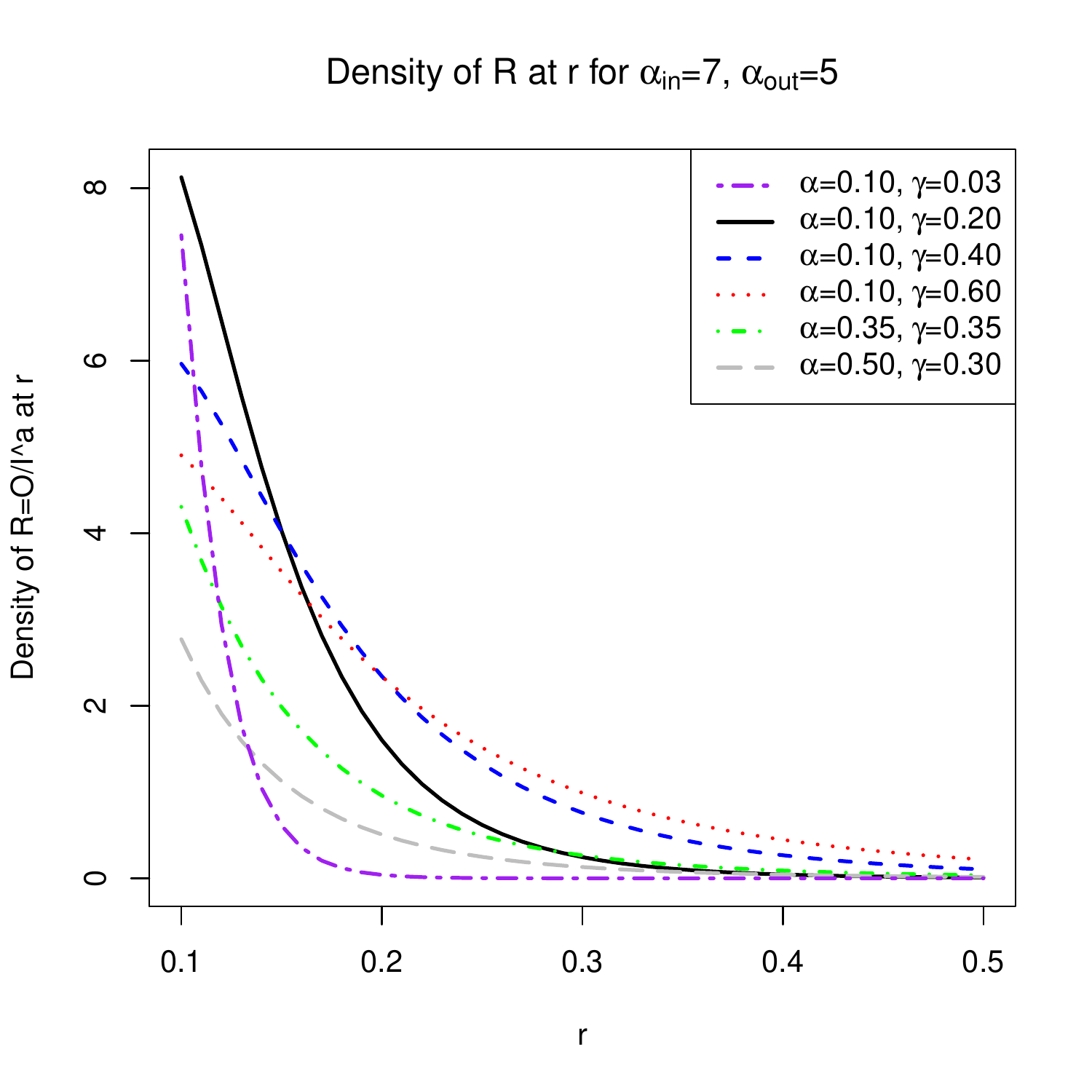}
\includegraphics[width=3in]{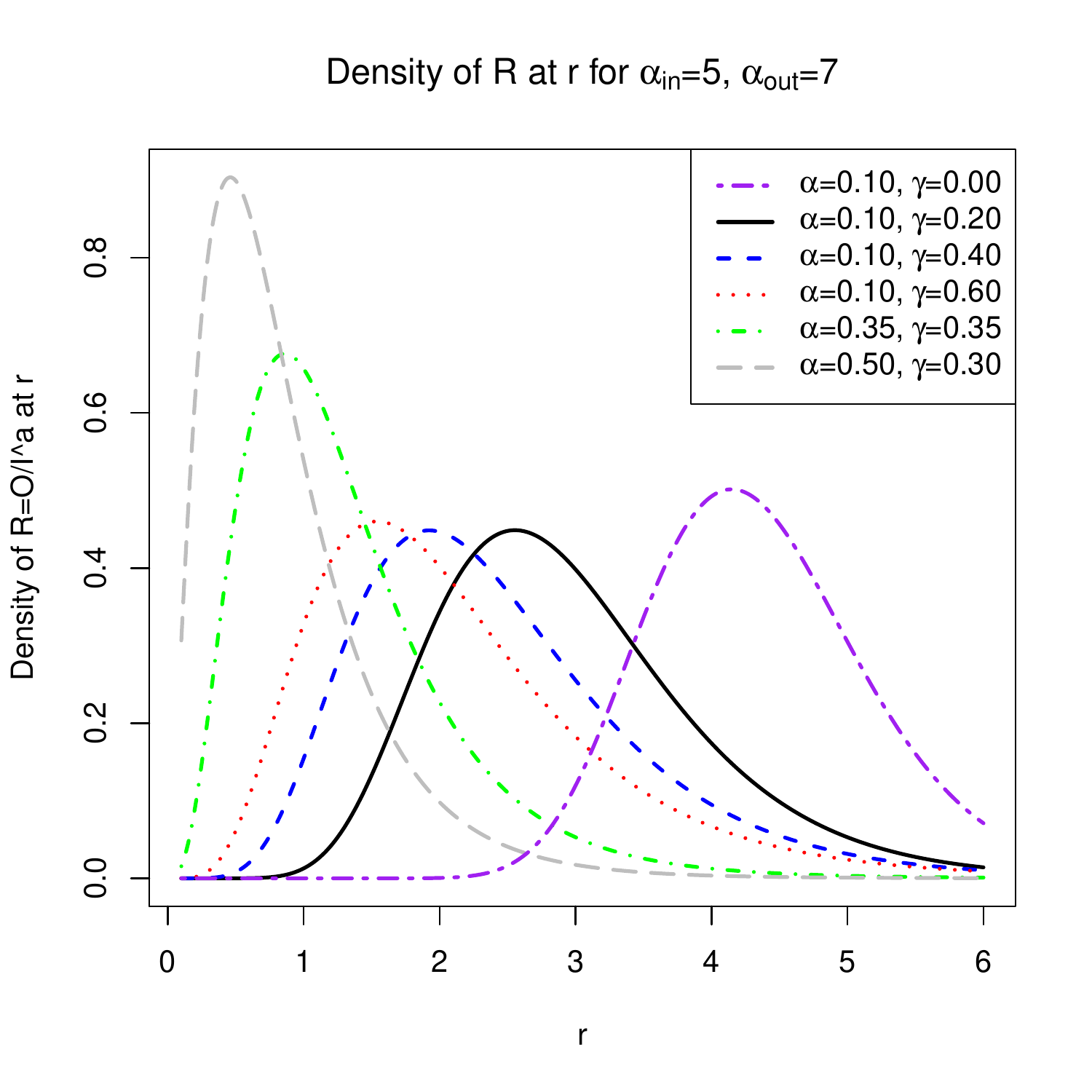}
\caption{The density $f_R(r)$ for $(\alphain,\alphaout)=(7,5)$ (left)
  and $(\alphain,\alphaout)=(5,7)$ (right) for various values of
  $\alpha, \gamma$.}
\label{fig:amyplots}
\end{figure}

Additionally, we
employ two numerical strategies based on 
the convergence of the conditional
distribution of $O/I^a$ given $I>m$ as $m\to \infty$.
Strategy 1 simulates a network
of $10^6$ nodes using software provided by James Atwood (University of
Massachusetts, Amherst) and then computes the histogram of
$O/I^a$ for nodes whose in-degree $I$ exceeds
some large threshold $m$. For the network simulation illustration, we chose
$m$ to be the $99.95\%$ quantile of the in-degrees. 
Strategy 2
computes $p_{ij}$
on a grid $(i,j)$ 
using the  recursion given in \eqref{e:recursion} and then estimates
the density of $O/I^a$ using only the 
grid points with $i$ larger than $m$, the $m$ chosen to be the same
value as used for the network simulation.

We observe
from Figure \ref{fig:amyplots} that the mode of $f_R(r)$ can drift
away from the origin depending on parameter values. So we 
transform $R$ using the 
$\arctan$ function which gives all plots the same compact support
$[0,\pi/2]$,  instead of  an
infinite domain as in Figure \ref{fig:amyplots}.
We compare  the density  of $R$ with the histogram based on network
simulation and the density approximation provided by iteration
 across varying sets of parameter values.   The density of $\arctan R$
 with the plots from the alternative strategies based on simulation
 and iteration are
displayed in Figure 2 for various choices of $(\deltain,\deltaout)$,
 with $\alpha=\beta=0.5$
and $\gamma=0$. For these parameter choices,
the plots of the 
theoretical density with those resulting from network simulation and
probability iteration are in good agreement.
\begin{figure}[t]
\centering
\includegraphics[width=5in, height=5in]{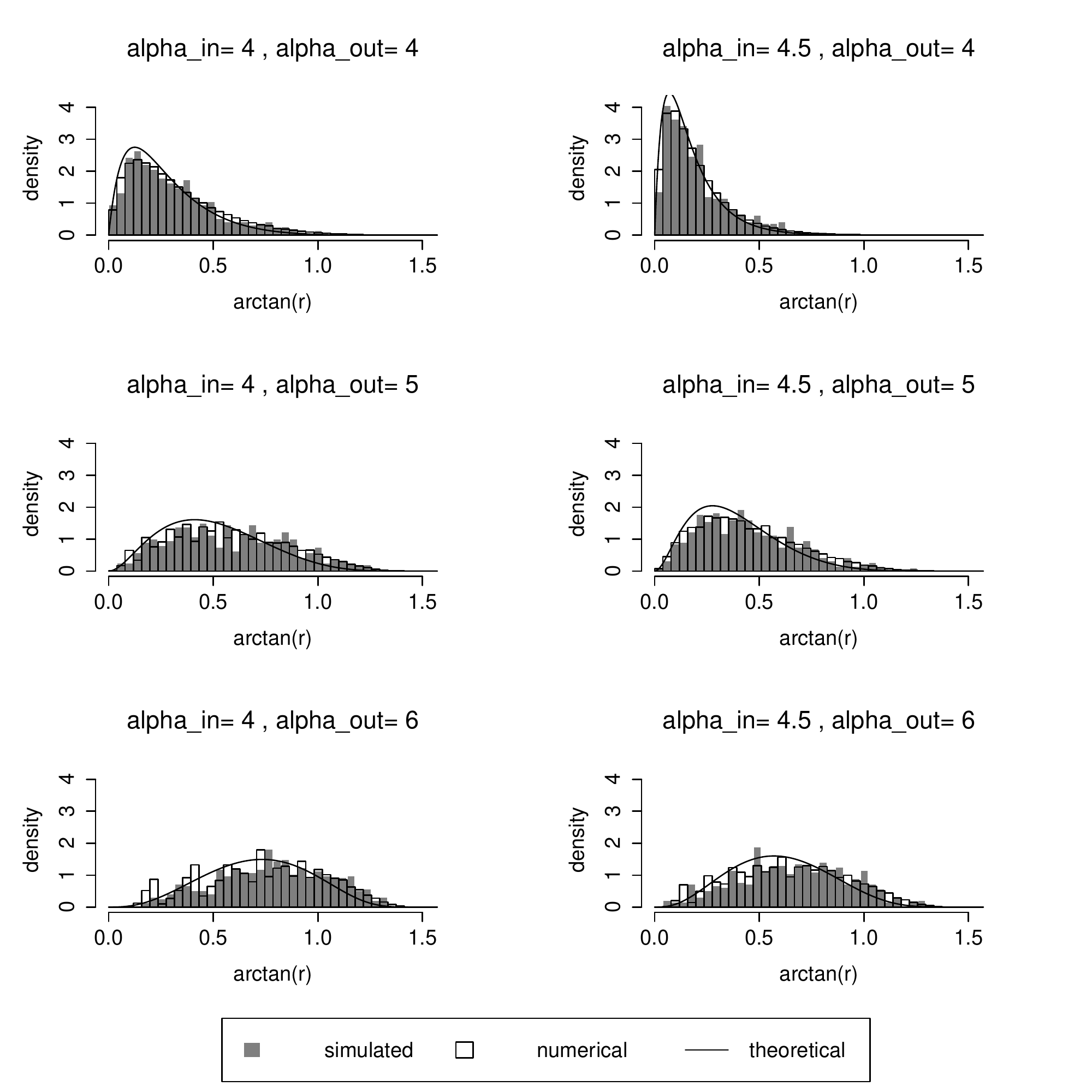}
\label{fig:PhyllisplotsR}
\caption{Comparison of the true density with the estimated densities
  of $\arctan R$ over various values of
  $(\alphain,\alphaout)$.}
\end{figure}

\subsubsection{Density of the angular
  measure}\label{subsubsec:angmeas}
A traditional way to describe the asymptotic dependence structure of a
standardized heavy tailed vector is by using the angular measure.
We transform the standardized vector $(I^a,O)\mapsto 
\bigl(\arctan(O/I^a),\sqrt{O^2+I^{2a}}\bigr)$ to polar coordinates and
then  the distribution of 
$\arctan(O/I^a)$ given $O^2+I^{2a}>m$, converges as $m\to\infty$ to
the 
distribution to a random variable $\Theta$. The distribution of
$\Theta$ is called the angular measure.
The density of
$\Theta$ can be calculated from Theorem \ref{t:two.measures}
in a similar fashion as in Corollory \ref{c:ratio} and
is given by 
\begin{eqnarray*}
f_\Theta(\theta) &\propto&
\frac{\gamma}{\deltain}(\cos\theta)^{\frac{\deltain}{a}+\frac{1}{a}-1}(\sin\theta)^{\deltaout -1}\int_0^\infty t^{c_1^{-1}+\deltain+a\deltaout } e^{-t(\cos\theta)^\frac{1}{a}-t^a\sin\theta}\, dt\, \\
&& +\, \frac{\alpha}{\deltaout }(\cos\theta)^{\frac{\deltain}{a}-1}(\sin\theta)^{\deltaout }\int_0^\infty t^{a-1+c_1^{-1}+\deltain+a\deltaout } e^{-t(\cos\theta)^\frac{1}{a}-t^a\sin\theta}\, dt\,. \\
\end{eqnarray*}

Two density approximations for the spectral density using network simulation and numerical iteration of the
$p_{ij}$ are obtained in  the same way as in Section
\ref{subsubsec:R}.
Using the same sets of parameters values as in
Figure 2, we overlay the density approximations with the theoretical density in Figure 3. 
The
truncation level was the $99.95\%$ percentile of $O^2+I^{2a}$.  The
agreement between the theoretical and estimated densities is quite
good across the range of parameter values used.  

The main difference
between Figures 2 and 3 is the choice of conditioning set.  In the first,
$I^a$ was conditioned to be large, while in the second the sum of
squares of the in- and out-degrees ($I^{2a}+O^2$) was conditioned to
be large.  Since the latter conditioning set is bigger and allows for
the case that the in-degree is  small relative to the out-degree, the
density function in a neighborhood 0 will have less weight in Figure 3
than Figure 2.

\begin{figure}[t]
	\centering
	\includegraphics[width=5in,height=5in]{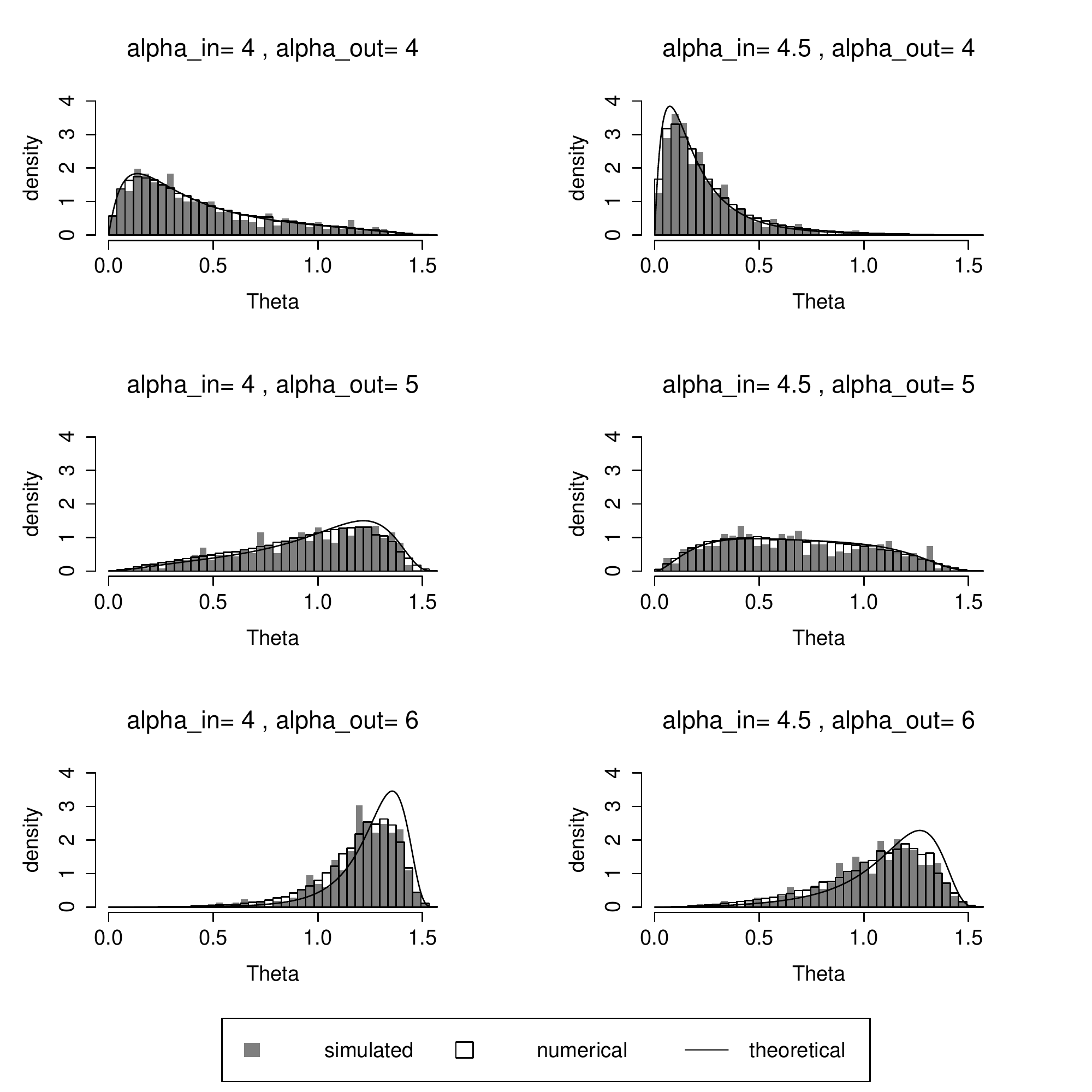}
	\caption{Comparison of the true angular density with estimates for   various values of $(\alphain,\alphaout)$.}
\end{figure}

\bibliographystyle{Genamystyle}
\bibliography{Genabibfile}


\end{document}